\newtheorem{theorem}{Theorem}
\newtheorem{corollary}[theorem]{Corollary}
\newtheorem{definition}[theorem]{Definition}
\newtheorem{question}[theorem]{Question}
\newtheorem{lemma}[theorem]{Lemma}
\newtheorem{proposition}[theorem]{Proposition}
\newcommand{\SI}{\mathcal{SI}}
\title{Fractional coloring of product signed graphs}
\author{
    Pie Desire EBODE ATANGANA \thanks{Department of Mathematics, Faculty of Sciences, University of Yaounde 1, Cameroon. E-mail: \texttt{desire.ebode@univ-yanounde1.cm}}} 
\date{\today}
\begin{document}

\maketitle

\begin{abstract}
This study examines the fractional chromatic number associated with the direct product of signed graphs. It shows that if $(H,\tau)$ is a signed circulant graph $G(n,S,T)$, then for any signed graph $(G,\sigma)$, the fractional chromatic number of their direct product is the lower number between the fractional chromatic number of $(G,\sigma)$ and $(H,\tau)$. 
\end{abstract}

\section{Introduction}
    Every graph examined in this study is finite in nature.  The signed graph \((G, \sigma)\) consists of a graph \(G = (V, E)\) accompanied by a signature function \(\sigma: E \to \{+, -\}\).   In the realm of graph theory, edges characterized by the symbol \(+\) are classified as positive edges, whereas those denoted by the symbol \(-\) are referred to as negative edges.   Harary (1953) posited that signed graphs were originally proposed as a method for depicting social networks.  In these diagrams, positive edges represent amicable connections, whereas negative edges denote adversarial relationships.   This framework has been employed across multiple disciplines, such as computer science, biology, and physics, as noted by Zaslavsky (1982).

  Signed graphs represent a specialized area within classical graph theory, integrating the supplementary framework of edge signatures.   The concept of equilibrium in signed graphs, first proposed by Harary \cite{harary1953notion}, pertains to graphs in which every cycle contains an equal quantity of negative edges.   Zaslavsky (1982) posits that balanced signed graphs serve as an excellent foundation for the advancement of more sophisticated graph-theoretic concepts, given their comparability to unsigned graphs.

  The act of coloring signed graphs unveils a realm of novel possibilities and intricate challenges.   Zaslavsky formulated an extensive theory of signed graph colouring by broadening the conventional chromatic number to include the signs of edges \cite{zaslavsky1982signed}.   In this context, the appropriate colouring of a signed graph involves assigning colours to nodes in a way that ensures the colours of neighbouring nodes adhere to specific conditions dictated by the sign of the connecting edge.   This approach has prompted extensive investigation into signed chromatic numbers and their fractional equivalents in recent years \cite{Hu2022fractionalCircular}.

One reason to examine signed graphs is that they may characterize systems with cooperative and competitive interactions.   Positive edges may activate biological networks, while negative edges may block them.   Signed graphs can show social network alliance and conflict dynamics.   An advanced approach for assessing graph coloring difficulty is the fractional chromatic number of signed graphs, which accounts for positive and negative edge interaction \cite{Steffen2021Concepts}.

The signed graph with the vertex set $V_G \times V_H$ is obtained by taking the direct product of two signed graphs $G = (V_G, E_G, \sigma)$ and $H = (V_H, E_H, \tau)$. In this graph, two neighboring vertices $(u,v)$ and $(u',v')$ are defined as $(u,u') \in E_G$ and $(v,v') \in E_H$.  One way to define the sign of an edge $((u,v),(u',v'))$ is to multiply the signs in $G$ and $H$ by one another.  There is a function $\sigma_{G \times H}((u,v),(u',v'))$ that is defined as the product of $\sigma(u,u')$ and $\tau(v,v')$.  The graph $(G,\sigma)$ is a signed graph.  Each of the positive and negative edges of this signed graph are contained in the subgraphs $G^+$ and $G^-$, respectively.

Let \( p \) be an even integer and \( q \) be an integer such that \( p \geq 2q \).  A \emph{$(p/q)$-(fractional) coloring} for the pair $(G,\sigma)$ is defined as a mapping $f:V(G)\to \binom{[p]}{q}$, ensuring that for every edge $uv\in E(G)$,
\begin{itemize}
    \item $f(u)\cap f(v)=\emptyset$ if $\sigma(uv)=+1$, and 
    \item $f(u)\cap \overline{f(v)}=\emptyset$ if $\sigma(uv)=-1$, 
\end{itemize}
    where $\overline{A}=A+{p}/{2}\pmod p=\{a+{p}/{2} \pmod p : a\in A\}$ and $[p]=\{0,1,\dots,p-1\}$. The \emph{fractional chromatic number} of $(G,\sigma)$ is given by $$\chi_{f}(G,\sigma)=\inf\left\{\frac{p}{q} : \text{there exists a $(p/q)$-coloring for $(G,\sigma)$}\right\}.$$

    A $(p/q)$-fractional coloring for $(G,\sigma)$ A \emph{$(p/q)$-circular coloring} is defined as $im(f)\subseteq \big\{ \{i,i+1,\dots,i+q-1\}: i\in [p]\big\}\subseteq \binom{[p]}{q}.$ 
    
    
  It is noted that the sets $A$ and $B$ are not required to be disjoint in a signed independent set $(A, B)$.   $(\emptyset, A)$ and $(A,\emptyset)$ are both signed independent sets in the graph $(G,\sigma)$ if $A$ is an independent set in $G^+$.    The representation of a signed independent set as $(A, B)$ implies that $(B, A)$ is also a signed independent set.

 The symbol \(J = (A, B)\) is employed to denote an independent signed set in \((G, \sigma)\).   A vertex \(v\) is considered a member of set \(J\) if and only if it is an element of either set \(A\) or set \(B\).   The size of set \(J\) can be represented as the sum of the sizes of sets \(A\) and \(B\), where \(|J| = |A| + |B|\).   The greatest feasible size of a signed independent set in \((G, \sigma)\) is \(\alpha^s(G, \sigma)\), which is occasionally referred to as the \emph{signed independence number} of \((G, \sigma)\).
  
 Let $\SI(G,\sigma)$ represent the collection of all signed independent sets for any $(G,\sigma)$.   The incidence matrix, $D$, illustrates an example of an inclusion connection between vertices and signed independent sets.   To be more precise, the value of \(d_{i,J}\) in the matrix is defined as \(\frac{(\mathbf{1}_{\{i \in A\}} + \mathbf{1}_{\{i \in B\}})}{2}\) for each vertex \(v\) in the graph \(G\) and for each signed independent set \(J = (A,B)\) in the set of signed independent sets \(\SI(G, \sigma)\).
  A weighting on the family of \emph{signed independent sets} of $G$ can be used to describe a \emph{fractional coloring} for a signed graph $(G,\sigma)$.  A signed independent set is a vertex subset that does not contain any edge and does not contravene the coloring constraints imposed by the signs.  The collection of all signed independent sets in $G$ is denoted by $\mathcal{I}(G)$.

A fractional coloring is a function $w: \mathcal{I}(G) \to [0,\infty)$ assigning weights to independent sets such that for every vertex $u \in V_G$,
\[
\sum_{\substack{I \in \mathcal{I}(G) \\ u \in I}} w(I) \geq 1.
\]
The \emph{fractional chromatic number} $\chi_f(G)$ is the minimum total weight over all such fractional colorings:
\[
\chi_f(G) = \min_{w} \sum_{I \in \mathcal{I}(G)} w(I).
\]

 The fractional chromatic number of signed graphs has been the subject of recent study for various applications.   For example, examine the study by Hu and Li \cite{Hu2022fractionalCircular} regarding planar signed graphs.  The imposition of planarity significantly influenced the fractional chromatic number.   Mohanadevi et al. \cite{Mohanadevi2022fractional} obtained noteworthy results in their examination of the fractional chromatic number of products of signed graphs, paralleling our research on signed circulant graphs.

 
 This study examines the fractional chromatic number associated with the direct product of signed graphs.  A homomorphism of a signed graph $(G,\sigma)$ to a signed graph $(H,\tau)$, denoted as $(G,\sigma)\to (H,\tau)$, constitutes a graph homomorphism that maintains the integrity of the signs associated with closed walks.  This represents a switching homomorphism of signed graphs with greater specificity.  The notation $(G,\sigma)\to (H,\tau)$ signifies the presence of a homomorphism.  An intriguing inquiry arises regarding the validity of the equality for all signed graphs $(G,\sigma)$ and $(H,\tau)$
 
Is it true that for any signed graphs  $(G,\sigma)$ and $(H,\tau)$, we have 
   \begin{equation}\label{equa.theo}
   \chi_f((G,\sigma) \times (H, \tau)) = \min\{\chi_f(G,\sigma), \chi_f(H, \tau)\}?
   \end{equation}

Assuming that $\sum_{J \ni v} \theta(v) \geq 1$ for every signed independent set \(J = (A,B)\) of $(G,\sigma)$, we say that $\theta: V(G)\to [0,1]$ is a fractional clique of $(G,\sigma)$.   Let $\theta(J)$ be the sum of all subsets $v \in V(G)$, where $J$ is any subset $ni$ of $v$.  As $\theta(V(G)$, we find the weight $w(\theta)$ of a fractional clique $\theta$ of $(G,\sigma)$.   As $(G,\sigma)$ is a set of fractional cliques, the maximum weight of one of these cliques is $w_f(G,\sigma)$.  The fact that $\chi_f(G)$ and $w_f(G)$ are determined by solving a linear programming problem and its dual, respectively, for any graph $G$ is well-known (Scheinerman, 2011).  This means that $w_f(G,\sigma)=\chi_f(G,\sigma)$ for every signed graph $(G,\sigma)$. Therefore, (\ref{equa.theo}) is equivalent to 
 
    \begin{equation}\label{equat.theo}
 w_f(G\times H,\sigma\times H)=\min\{w_f(G,\sigma), w_f(H, \tau)\}
 \end{equation} 
 
 Let $\phi$ and $\psi$ denote fractional cliques of the graphs $(G,\sigma)$ and $(H,\tau)$, respectively.  Define $\rho: V(G\times H)\to[0,1]$ by $\rho(u,v)=\frac{\phi(u)\psi(v)}{m}$, where $m=\max\{w_f(G,\sigma), w_f(H, \tau)\}$.  It is reasonable to inquire whether $\rho$ constitutes a fractional clique of $(G\times H,\sigma\times H)$.  This raises the subsequent question.
 
\begin{question}\label{equa.theo1}
    Let $\phi$ and $\psi$ represent fractional cliques of the signed graphs $(G,\sigma)$ and $(H,\tau)$, respectively.  Let $J$ be a signed independent set of $(G \times H, \sigma \times H)$.  Is it accurate to assert that \begin{equation}\label{equat1.theo} \sum_{(u,v)\in J} \phi(u)\psi(v)\leq \max\{w_f(G,\sigma), w_f(H, \tau)\}?\end{equation} \end{question}
 
The following theorem is the primary conclusion of this paper:
 
 \begin{theorem}\label{maintheorem}
 For every signed graph $(G, \sigma)$, the fractional chromatic number of the direct product \(G(n, S, T) \times (G, \sigma)\) fulfills the following if $(H,\tau) = G(n, S, T)$ is a signed circulant graph:
 The formula is $\chi_f((G, \sigma)\times G(n, S, T)) = \min\{\chi_f(G, \sigma),\chi_f(G(n, S, T))\}$.\end{theorem}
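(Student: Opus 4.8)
The plan is to establish the two inequalities $\chi_f((G,\sigma)\times (H,\tau)) \le \min\{\chi_f(G,\sigma),\chi_f(H,\tau)\}$ and $\chi_f((G,\sigma)\times(H,\tau)) \ge \min\{\chi_f(G,\sigma),\chi_f(H,\tau)\}$ separately, where $(H,\tau)=G(n,S,T)$. The first (upper bound) inequality is the ``easy'' direction and holds for \emph{all} signed graphs, not just circulants: if $(G,\sigma)\to(H,\tau)$ then $\chi_f(G,\sigma)\le\chi_f(H,\tau)$, and since the projections $(G,\sigma)\times(H,\tau)\to(G,\sigma)$ and $(G,\sigma)\times(H,\tau)\to(H,\tau)$ are signed-graph homomorphisms (they preserve adjacency and, by the definition $\sigma_{G\times H}=\sigma\cdot\tau$, preserve the sign of every closed walk), we get $\chi_f((G,\sigma)\times(H,\tau))\le\chi_f(G,\sigma)$ and $\le\chi_f(H,\tau)$, hence $\le$ the minimum. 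I would state this as a preliminary lemma and dispatch it quickly.

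The substance is the lower bound, and by the LP-duality identity $w_f=\chi_f$ stated in the excerpt, this is equivalent to exhibiting, for any fractional cliques $\phi$ of $(G,\sigma)$ and $\psi$ of $(H,\tau)$, a fractional clique of the product of weight $\frac{w_f(G,\sigma)\,w_f(H,\tau)}{m}=\min\{w_f(G,\sigma),w_f(H,\tau)\}$; the natural candidate is $\rho(u,v)=\phi(u)\psi(v)/m$ with $m=\max\{w_f(G,\sigma),w_f(H,\tau)\}$, whose total weight is exactly the minimum. So everything reduces to Question~\ref{equa.theo1}: showing that for every signed independent set $J$ of $(G\times H,\sigma\times\tau)$, $\sum_{(u,v)\in J}\phi(u)\psi(v)\le m$. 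Here is where I would use the circulant structure of $(H,\tau)=G(n,S,T)$. The key structural fact I would prove first is a ``fibre decomposition'': for a signed independent set $J=(A,B)$ of the product, look at the fibres $J_u=\{v:(u,v)\in J\}$ for $u\in V(G)$ — more precisely the signed fibres $(A_u,B_u)$. Because adjacency in the product requires adjacency in \emph{both} coordinates and the sign multiplies, one shows that for each fixed pair $u,u'$ adjacent in $G$, the interaction between the fibres over $u$ and over $u'$ is controlled: either $J_u$ (as a signed set) is itself a signed independent set of $(H,\tau)$, or the union/overlap of fibres over a $G$-independent set is. The vertex-transitivity and the explicit circulant shift $z\mapsto z+1\pmod n$ let me average: one can rotate the $H$-coordinate of $J$ by each of the $n$ shifts, sum the resulting quantities, and because $\psi$ can be taken vertex-transitive on the circulant (a standard symmetrization argument — replace $\psi$ by its average over the automorphism group, which does not decrease weight and preserves the fractional-clique property), the cross terms telescope so that $\sum_{(u,v)\in J}\phi(u)\psi(v)$ is bounded by $\max\{\sum_u\phi(u)\cdot\max_v\psi(v)\text{-type bound},\ \dots\}\le m$.

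Concretely, the heart of the argument I anticipate is: partition $V(G)$ according to how the fibre $(A_u,B_u)$ ``sits'' inside $(H,\tau)$, use the defining inequality $\sum_{J'\ni v}\psi(v)\ge 1$ for signed independent sets $J'$ of $(H,\tau)$ applied to each fibre, then assemble these via the fractional-clique inequality for $\phi$ on $(G,\sigma)$. The circulant hypothesis enters because it guarantees the signed independent sets of $(H,\tau)$ have enough symmetry that the fibre-wise bounds can be combined without loss — for a general $(H,\tau)$ the fibres over different $G$-vertices need not be comparable and the product clique $\rho$ can fail. I would make this precise by showing that for any signed independent set $J$ of the product, one of the two projections ``dominates'' in the sense that either $\sum_{(u,v)\in J}\phi(u)\psi(v)\le w_f(G,\sigma)\cdot\big(\max_v\psi(v)$ along a suitable fibre$\big)$ collapsing to $w_f(G,\sigma)$, or symmetrically to $w_f(H,\tau)$; either way the bound is $\le\max\{w_f(G,\sigma),w_f(H,\tau)\}=m$.

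The main obstacle, as flagged, is verifying Question~\ref{equa.theo1} — i.e., that $\rho$ really is a fractional clique — and in particular controlling the signed independent sets $J$ whose fibres mix positive and negative parts in a way that exploits the sign-product rule. The delicate point is that $\overline{f(v)}$-type constraints from negative edges interact across the two coordinates, so a signed independent set of the product is \emph{not} simply a product of a signed independent set of $G$ with one of $H$; I expect to need a careful case analysis on the balanced/unbalanced structure of the relevant closed walks, and this is exactly where the explicit circulant description $G(n,S,T)$ (with $S$ the positive-connection set and $T$ the negative one) gives the leverage needed to close the argument. Once Question~\ref{equa.theo1} is affirmed, the lower bound and hence the theorem follow immediately from $w_f=\chi_f$ and the weight computation $w(\rho)=\phi(V(G))\psi(V(H))/m = w_f(G,\sigma)w_f(H,\tau)/m=\min\{\chi_f(G,\sigma),\chi_f(H,\tau)\}$.
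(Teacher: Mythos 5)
Your reduction is the right one and matches the paper's framing: by LP duality ($w_f=\chi_f$) everything hinges on showing that $\rho(u,v)=\phi(u)\psi(v)/m$ is a fractional clique of the product, i.e.\ on the bound $\sum_{(u,v)\in J}\phi(u)\psi(v)\le\max\{w_f(G,\sigma),w_f(H,\tau)\}$ for every signed independent set $J$ of the product, and the fibres $J_u$ are the right objects to study. But that bound \emph{is} the theorem, and you do not prove it. The symmetrization-plus-rotation idea (``the cross terms telescope'') is never instantiated, and it is unclear what it would bound: the difficulty is not the weight carried by any single fibre (a fibre over one vertex $u$ may be all of $V(H)$ and is already controlled by the fractional-clique constraint on $\phi$ at $u$), but the interaction between fibres over \emph{adjacent} vertices of $G$. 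You explicitly flag the verification of Question~\ref{equa.theo1} as ``the main obstacle'' and then leave it open. The paper closes exactly this gap with an induction on $(G,\sigma)$ via a minimal counterexample: using the ``restricted fractional clique'' property that circulants supply, it first shows that $\psi(J_u)+\psi(J_{u'})>2$ forces $u$ and $u'$ to be non-adjacent, so $P=\{u:\psi(J_u)\ge 2\}$ is a signed independent set of $(G,\sigma)$; it then deletes $N[P]$, rescales $\phi$ by $1/(1-\phi(P))$ to obtain a fractional clique of $G\setminus N[P]$, and invokes minimality to bound the remaining part of the sum. Your sketch contains no analogue of this induction and no mechanism for handling the vertices with $0<\psi(J_u)<2$, which is where the work lies.

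A secondary but genuine error: the ``easy'' inequality does not follow from the projections being signed-graph homomorphisms, because they are not. Since the sign of a product edge is $\sigma(uu')\tau(vv')$, a closed walk in the product whose $H$-projection is unbalanced projects to a closed walk in $G$ of the opposite sign, so signs of closed walks are not preserved; likewise a positive product edge can arise from two negative factor edges, so the preimage of a signed independent set of $(G,\sigma)$ under the projection need not be a signed independent set of the product. The inequality $\chi_f((G,\sigma)\times(H,\tau))\le\min\{\chi_f(G,\sigma),\chi_f(H,\tau)\}$ is still the easy half (the paper also treats it as such), but in the signed setting it needs an argument that respects the sign-multiplication rule rather than the unsigned projection argument you invoke.
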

  A few previous findings about the independence number of the direct product of graphs are generalized in the theorem \ref{maintheorem}.

This finding establishes the foundation for future research on fractional coloring in classes of structured graphs and extends the signed graph representation of Hedetniemi's conjecture to fractions.

\section{Connection to Hedetniemi's Hypothesis}

The nature of the relationship between Hedetniemi's conjecture and the equality of fractional chromatic numbers is unclear.  If $(G,\sigma)$ and $(H,\tau)$ are related signed graphs and the fractional chromatic number equality is true for them, then Hedetniemi's conjecture is true for those as well.

Let $(G,\sigma)$ and $(H,\tau)$ represent two signed graphs.  The lexicographic product is represented as $(G,\sigma)[(H,\tau)]$. The graph $G[H]$ is underlined, and the sign $\sigma[\tau]$ is defined as follows:
  \begin{enumerate}
       \item  $(\sigma[\tau])((u,x)(v,y))=\sigma(uv)$ if $uv \in E(G)$.
     
     \item  $(\sigma[\tau])((u,x)(u,y))=\tau(xy)$ if $xy \in E(H)$.
  \end{enumerate}

\begin{lemma}\label{lem:lex}
If 
\[
\chi_f((G,\sigma) \times (H,\tau)) = \min\{\chi_f(G,\sigma), \chi_f(H,\tau)\},
\]
then there exists an integer $n$ such that
\[
\chi((G,\tau)[K_n] \times (H,\tau)[K_n]) = \min\{\chi((G,\tau)[K_n]), \chi((H,\tau)[K_n])\}.
\]
\end{lemma}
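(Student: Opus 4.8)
The plan is to connect the fractional chromatic number identity for the direct product of signed graphs to the integral chromatic number identity for blow-ups, exploiting the standard fact that the fractional chromatic number of a signed graph is a rational number realized by some $(p/q)$-coloring, and that blowing up each vertex into a clique $K_n$ scales the chromatic number in a controlled way. The key algebraic input is the signed analogue of the classical identity $\chi(G[K_n]) = n\,\chi(G)$ when $n$ is divisible by the denominator of $\chi_f(G)$; more precisely, for $n$ large enough and suitably divisible, $\chi((G,\sigma)[K_n]) = \lceil n\,\chi_f(G,\sigma)\rceil$ and in fact equals $n\,\chi_f(G,\sigma)$ exactly when $n$ clears the denominator. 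I would first state and justify this lexicographic-scaling identity for signed graphs, using the $(p/q)$-coloring definition given in the excerpt: a $(p/q)$-coloring of $(G,\sigma)$ yields, after replacing each $q$-subset assignment by an injective assignment of $q$ "slots" per vertex of the $K_n$-blow-up, a proper signed coloring of $(G,\sigma)[K_n]$ with roughly $np/q$ colors, and conversely a proper coloring of the blow-up with few colors compresses back to a fractional coloring.

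Second, I would observe that the direct product interacts with the lexicographic-by-$K_n$ operation in the expected way: there is a signed homomorphism (indeed an isomorphism onto a spanning subgraph, or at least homomorphisms in both directions that preserve the relevant chromatic parameters) between $\big((G,\sigma)\times(H,\tau)\big)[K_n]$ and $(G,\sigma)[K_n]\times(H,\tau)[K_n]$, or more carefully between $(G,\sigma)[K_{n^2}]\times\cdots$ — the exact bookkeeping of which blow-up size to use on which side is where care is needed. Combining this with the scaling identity, $\chi\big((G,\sigma)[K_n]\times(H,\tau)[K_n]\big)$ becomes, up to rounding, $n\cdot\chi_f\big((G,\sigma)\times(H,\tau)\big)$, while $\min\{\chi((G,\sigma)[K_n]),\chi((H,\tau)[K_n])\} = n\cdot\min\{\chi_f(G,\sigma),\chi_f(H,\tau)\}$ once $n$ is chosen divisible by both denominators. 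The hypothesis of the lemma then forces the two sides to agree, which is exactly the desired integral identity.

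The main obstacle I anticipate is the rounding and divisibility management: the fractional chromatic numbers $\chi_f(G,\sigma)$ and $\chi_f(H,\tau)$ are rationals $a/b$ and $c/d$, and $\chi_f((G,\sigma)\times(H,\tau))$ has its own denominator; I must pick $n$ to be a common multiple of all three denominators (which exist because these are finite signed graphs, so the infimum defining $\chi_f$ is attained — this is the linear-programming duality remark $w_f = \chi_f$ quoted earlier) so that every $\lceil\,\cdot\,\rceil$ collapses to an exact product and no slack is introduced. A secondary subtlety is checking that the blow-up construction respects the signed coloring constraints: the negation operation $\overline{A} = A + p/2$ must be compatible with the slot-expansion, so one should expand along the $p$ colors in a way that commutes with the shift by $p/2$; choosing $n$ even and pairing slots appropriately handles this. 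Once these divisibility choices are fixed, the remaining steps are routine translations between the $(p/q)$-coloring language and the clique-blow-up language, and the lemma follows by substituting the hypothesis into the chain of equalities.
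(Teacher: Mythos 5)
Your proposal follows essentially the same route as the paper: choose $n$ clearing all relevant denominators so that $\chi((\cdot)[K_n]) = n\,\chi_f(\cdot)$ holds exactly, relate $((G,\sigma)\times(H,\tau))[K_n]$ to $(G,\sigma)[K_n]\times(H,\tau)[K_n]$, and substitute the hypothesis. The one detail you leave open --- the comparison between those two graphs, where you hedge about ``homomorphisms in both directions'' --- is settled in the paper by the single explicit homomorphism $(g,h,i)\mapsto(g,i,h,i)$, which yields $\chi\bigl(((G,\sigma)\times(H,\tau))[K_n]\bigr) \le \chi\bigl((G,\sigma)[K_n]\times(H,\tau)[K_n]\bigr)$; no reverse map is needed because the trivial projection bound supplies the opposite inequality.
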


\begin{proof}
It is sufficient to demonstrate the existence of an integer $n$ such that
\[
\chi((G,\tau)[K_n] \times (H,\tau)[K_n]) \le \min\{\chi((G,\tau)[K_n]), \chi((H,\tau)[K_n])\}.
\]

For any signed graph $(G',\sigma')$, there exists an integer $k$ such that for every positive integer $l$, 
\[
\chi((G',\sigma)[K_{l k}]) = l k \chi_f(G',\sigma').
\]
Therefore, there exists an integer $n$ such that
\[
\chi((G,\tau)[K_n]= n \chi_f(G,\sigma), \quad \chi((H,\tau)[K_n])) = n \chi_f(H,\tau), \quad \chi((G,\sigma) \times (H,\tau))[K_n]) = n \chi((G,\sigma) \times (H,\tau)).
\]

The vertices of the graph $(G,\sigma) \times (H,\tau)[K_n]$ are represented as triples $(g,h,i)$, where $g$ belongs to the vertex set $V(G)$, $h$ belongs to the vertex set $V(H)$, and $i$ belongs to the vertex set $V(K_n)$.  Two vertices $(g,h,i)$ and $(g',h',i')$ are considered adjacent if and only if either $g g' \in E(G)$ and $h h' \in E(H)$, or $(g,h) = (g',h')$ and $i \neq i'$.

The 4-tuples make up the vertex set of $(G,\tau)[K_n] \times (H,\tau)[K_n]$.  where $g \in V(G)$, $h \in V(H)$, and $i,j \in V(K_n)$.  If $(g,i,h,j)$ and $(g',i',h',j')$ are two nearby vertices, then one of the following must be true:
\begin{itemize}
    \item $g g' \in E(G)$ and $h h' \in E(H)$,
    \item $(g,h) = (g',h')$ and $i \neq i'$ and $j \neq j'$,
    \item $g = g'$, $i \neq i'$, and $h h' \in E(H)$,
    \item $g g' \in E(G)$, $h = h'$, and $j \neq j'$.
\end{itemize}

You may easily check that the mapping
\[
f(g,h,i) = (g,i,h,i)
\]
is a homomorphism from $(G,\sigma) \times (H,\tau))[K_n]$ to  $(G,\tau)[K_n] \times (H,\tau)[K_n]$. Hence,
\[
\chi((G \times H)[K_n]) \leq \chi(G[K_n] \times H[K_n]).
\]

Since
\[
\chi((G \times H)[K_n]) = n \chi_f(G \times H) = n \min\{\chi_f(G), \chi_f(H)\} = \min\{\chi(G[K_n]), \chi(H[K_n])\},
\]
we conclude that
\[
\chi((G,\tau)[K_n] \times (H,\tau)[K_n]) \ge \min\{\chi((G,\tau)[K_n]), \chi((H,\tau)[K_n])\}.
\]

This completes the proof.
\end{proof}

\section{Fractional Signed Graph Persistence}

Take into consideration that $J$ is a signed independent set of $(G\times H,\sigma\times H)$, and that $\phi$ and $\psi$ are fractional cliques of $(G,\sigma)$ and $(H,\tau)$, respectively.  We provide the following definition for the mapping $f_{\phi,\psi}$:

 \begin{equation}\label{equat1.theo} f_{\phi,\psi}(J)= \sum_{(u,v)\in J} \phi(u)\psi(v).\end{equation} As an attempt to prove that Equation~\ref{equa.theo1} holds for all signed graphs $(G,\sigma)$ and $(H,\tau)$, we introduce the concept of a fractional persistent signed graph as follows.
\begin{definition}
Let $\psi$ denote a fractional clique of $(H,\tau)$.  For every signed graph $(G,\sigma)$, for every fractional clique $\phi$ of $(G,\sigma)$, and for every signed independent set $J$ of $(G\times H,\sigma\times H)$, we prove that $f_{\phi,\psi}(J)\le\max\{w_f(G,\sigma), w_f(H, \tau)\}$, and thus that $\psi$ is a persistent fractional clique of $(H,\tau)$.
\end{definition}

\begin{definition}
A signed graph $(H,\tau)$ is classified as fractional persistent if it possesses a persistent fractional clique $\psi$ with a weight denoted by $w_f(H,\tau)$.  A signed graph $(H,\tau)$ is defined as strongly fractional persistent if it holds true that every fractional clique of $(H,\tau)$ exhibits persistence.
\end{definition}

As the name suggests, a highly fractional persistent signed graph is also fractional persistent.

\begin{lemma}
  If $(H,\tau)$ is fractional persistent then, for any graph $(G,\sigma)$, $$\chi_f((G, \sigma)\times (H,\tau) = \min\{\chi_f(G, \sigma),\chi_f(H,\tau)\}.$$
\end{lemma}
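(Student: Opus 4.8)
The plan is to establish the two inequalities $\chi_f((G,\sigma)\times(H,\tau))\le\min\{\chi_f(G,\sigma),\chi_f(H,\tau)\}$ and $\chi_f((G,\sigma)\times(H,\tau))\ge\min\{\chi_f(G,\sigma),\chi_f(H,\tau)\}$ separately, arguing throughout on the clique side: by the linear-programming duality recalled above, $\chi_f(G',\sigma')=w_f(G',\sigma')$ for every signed graph, so it suffices to prove $w_f((G\times H,\sigma\times\tau))=\min\{w_f(G,\sigma),w_f(H,\tau)\}$. The inequality $\le$ is the routine direction, the fractional analogue of the projection bound for direct products; the whole content of the lemma is the reverse inequality, and this is exactly where the hypothesis that $(H,\tau)$ is fractional persistent is used.

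For the reverse inequality, let $\psi$ be a persistent fractional clique of $(H,\tau)$ with $w(\psi)=w_f(H,\tau)$ — such a $\psi$ exists by hypothesis — and let $\phi$ be a fractional clique of $(G,\sigma)$ attaining $w(\phi)=w_f(G,\sigma)$, which exists because the maximum in the defining linear program is attained. Set $m=\max\{w_f(G,\sigma),w_f(H,\tau)\}$ and define $\rho\colon V(G\times H)\to[0,1]$ by $\rho(u,v)=\phi(u)\psi(v)/m$, exactly the map considered before Question~\ref{equa.theo1}. I would then verify that $\rho$ is a fractional clique of $(G\times H,\sigma\times\tau)$ and compute its weight.

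That $\rho$ is $[0,1]$-valued is immediate since $\phi,\psi$ take values in $[0,1]$ and $m\ge 1$. For the weight,
\[
w(\rho)=\sum_{(u,v)}\rho(u,v)=\frac1m\Big(\sum_{u}\phi(u)\Big)\Big(\sum_{v}\psi(v)\Big)=\frac{w_f(G,\sigma)\,w_f(H,\tau)}{\max\{w_f(G,\sigma),w_f(H,\tau)\}}=\min\{w_f(G,\sigma),w_f(H,\tau)\}.
\]
The essential point is the clique constraint: for every signed independent set $J$ of $(G\times H,\sigma\times\tau)$ one must have $\sum_{(u,v)}d_{(u,v),J}\,\rho(u,v)\le1$. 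Since $d_{(u,v),J}\le\mathbf{1}_{(u,v)\in J}$ by the definition of the incidence weights, the left-hand side is at most $\frac1m\sum_{(u,v)\in J}\phi(u)\psi(v)=\frac1m f_{\phi,\psi}(J)$, and the definition of a persistent fractional clique, applied to the arbitrary fractional clique $\phi$ of $(G,\sigma)$ and to $J$, gives $f_{\phi,\psi}(J)\le\max\{w_f(G,\sigma),w_f(H,\tau)\}=m$; hence $\sum_{(u,v)}d_{(u,v),J}\rho(u,v)\le1$. This is the only nontrivial step, and it is where persistence is consumed. With $\rho$ a legitimate fractional clique, $w_f((G\times H,\sigma\times\tau))\ge w(\rho)=\min\{w_f(G,\sigma),w_f(H,\tau)\}$; combining with the routine inequality $\le$ and $\chi_f=w_f$ yields the claim.

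The hard part is not this argument — it is two lines once persistence is in hand — but keeping the normalisation airtight: one must check that the halved incidence weights $d_{i,J}=(\mathbf{1}_{\{i\in A\}}+\mathbf{1}_{\{i\in B\}})/2$ appearing in the definition of a fractional clique behave consistently on the two factors and on the product, so that $d_{(u,v),J}\le\mathbf{1}_{(u,v)\in J}$ is the only inequality actually lost, and one must dispose of degenerate cases ($m\ge1$, and $V(G)$ or $V(H)$ empty). A secondary point that deserves genuine verification rather than a wave of the hand is the inequality $\chi_f((G\times H,\sigma\times\tau))\le\min\{\chi_f(G,\sigma),\chi_f(H,\tau)\}$: for signed graphs the coordinate projections need not preserve the signs of closed walks, so I would need to establish it — either from a result in the literature on products of signed graphs or by a direct argument on fractional colourings — and only then combine it with the clique bound above.
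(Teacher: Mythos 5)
Your proposal follows essentially the same route as the paper's proof: take a persistent fractional clique $\psi$ of $(H,\tau)$ and a maximum-weight fractional clique $\phi$ of $(G,\sigma)$, form $\rho(u,v)=\phi(u)\psi(v)/m$ with $m=\max\{w_f(G,\sigma),w_f(H,\tau)\}$, use persistence to verify the clique constraint on every signed independent set, compute the weight as the minimum, and combine with the easy upper bound. You are somewhat more careful than the paper about the incidence-weight normalisation and about the fact that the ``trivial'' inequality $\chi_f((G,\sigma)\times(H,\tau))\le\min\{\chi_f(G,\sigma),\chi_f(H,\tau)\}$ actually requires an argument for signed products, but the substance of the proof is the same.
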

\begin{proof}
Here, $\phi$ is a fractional clique of $(G,\sigma)$ with weight $\omega_f(G,\sigma)$, and $\psi$ is a persistent fractional clique of $(H,\tau)$ with weight $\omega_f(H,\tau)$.  It is assumed that $m$ is equal to the maximum of $\omega_f(G,\sigma), \omega_f(H,\tau)\}$.  The function \[
\rho : V(G \times H) \to [0,1], \quad \rho(u,v) = \frac{\phi(u) \psi(v)}{m}.
\]

 There exists a signed independent set $J$ of $G \times H$ such that $\rho(J) \le 1$ since $\psi$ is persistent.  Therefore, $\rho$ is a sect of $G \times H$ that is fractional.  The weight of $\rho$ is $\min\{\omega_f(G,\sigma), \omega_f(H,\tau)\}$, and this can be easily verified.  So, the absolute value of \[ \omega_f(G \times H) \ge \min\{\omega_f(G,\sigma), \omega_f(H,\tau)\}.\]

 Next, the lemma is derived from the trivial inequality $\omega_f(G \times H) \le \min\{\omega_f(G,\sigma), \omega_f(H,\tau)\}$.
\end{proof}

Question \ref{equa.theo1} inquires about the strong fractional persistence of all signed graphs.  A positive response to the subsequent (less robust) inquiry would suggest a positive response to Question \ref{equa.theo}.

\begin{question}\label{ques3}
Do all signed graphs $(H,\tau)$ have fractional persistence?
\end{question}

The subsequent section of this paper presents sufficient conditions for a signed graph to exhibit fractional persistence and strong fractional persistence.
\begin{lemma}
Let us consider the structure $(H,\tau)$ and assert that it exhibits characteristics of fractional persistence.  If $(H,\tau)$ has a homomorphism to $(H',\tau')$ and $\omega_f(H',\tau') = \omega_f(H,\tau)$, it follows that $(H',\tau')$ is fractional persistent as well.
\end{lemma}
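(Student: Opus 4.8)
The plan is to transport a persistent fractional clique of $(H,\tau)$ along the homomorphism by a pushforward. By hypothesis, fix a persistent fractional clique $\psi$ of $(H,\tau)$ with $w(\psi)=w_f(H,\tau)$. All the notions involved — fractional clique, the value $w_f$, persistence, and the signed direct product — are invariant under switching, and switching $(H,\tau)$ at a vertex set $W$ amounts to switching $(G\times H,\sigma\times\tau)$ at $V(G)\times W$ for every $(G,\sigma)$; so we may assume the given homomorphism $h\colon (H,\tau)\to(H',\tau')$ is an ordinary sign-preserving homomorphism. Define $\psi'\colon V(H')\to[0,\infty)$ by $\psi'(v')=\sum_{v\in h^{-1}(v')}\psi(v)$. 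The goal is to show that $\psi'$ is a persistent fractional clique of $(H',\tau')$ of weight $w_f(H',\tau')$, which is exactly what fractional persistence of $(H',\tau')$ demands.

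The one technical ingredient is a pullback lemma: if $\eta\colon (G_1,\sigma_1)\to(G_2,\sigma_2)$ is a sign-preserving homomorphism and $J_2=(A_2,B_2)$ is a signed independent set of $(G_2,\sigma_2)$, then $\eta^{-1}(J_2):=\bigl(\eta^{-1}(A_2),\eta^{-1}(B_2)\bigr)$ is a signed independent set of $(G_1,\sigma_1)$. Indeed, a positive edge inside $\eta^{-1}(A_2)$ or inside $\eta^{-1}(B_2)$, or a negative edge between them, would map to a genuine edge of $(G_2,\sigma_2)$ of the same sign — genuine because $G_2$ is loopless, so $\eta$ identifies no two adjacent vertices — contradicting the defining conditions of $J_2$; and $\eta^{-1}(A_2)\cap\eta^{-1}(B_2)=\eta^{-1}(A_2\cap B_2)$, so allowing $A_2\cap B_2\neq\emptyset$ causes no trouble.

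First I would apply this lemma to $h$ itself to see that $\psi'$ is a fractional clique of $(H',\tau')$: for any signed independent set $J'$ of $(H',\tau')$ one gets $\sum_{v'\in J'}\psi'(v')=\sum_{v\in h^{-1}(J')}\psi(v)\le 1$ because $\psi$ is a fractional clique of $(H,\tau)$, and taking $J'=(\{v'\},\emptyset)$ gives $0\le\psi'(v')\le 1$, so $\psi'$ is $[0,1]$-valued. Its weight is $w(\psi')=\sum_{v'\in V(H')}\psi'(v')=\sum_{v\in V(H)}\psi(v)=w_f(H,\tau)=w_f(H',\tau')$, using the hypothesis on the clique numbers. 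Next I would apply the lemma to the map $\eta\colon (u,v)\mapsto(u,h(v))$, which is a sign-preserving homomorphism $(G\times H,\sigma\times\tau)\to(G\times H',\sigma\times\tau')$ since $h$ sends each edge of $H$ to an edge of $H'$ with the same sign and the direct-product sign is the product of the component signs. Given $(G,\sigma)$, a fractional clique $\phi$ of $(G,\sigma)$, and a signed independent set $J'$ of $(G\times H',\sigma\times\tau')$, put $J:=\eta^{-1}(J')$; since $\eta^{-1}(u,v')=\{u\}\times h^{-1}(v')$, expanding $\psi'$ and regrouping the resulting double sum gives
\[
f_{\phi,\psi'}(J')=\sum_{(u,v')\in J'}\phi(u)\psi'(v')=\sum_{(u,v)\in J}\phi(u)\psi(v)=f_{\phi,\psi}(J)\le\max\{w_f(G,\sigma),w_f(H,\tau)\},
\]
the last inequality by persistence of $\psi$. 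As $w_f(H,\tau)=w_f(H',\tau')$, this says $f_{\phi,\psi'}(J')\le\max\{w_f(G,\sigma),w_f(H',\tau')\}$, so $\psi'$ is persistent and $(H',\tau')$ is fractional persistent.

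The main obstacle will be the bookkeeping at the two points I have glossed over. First, the switching reduction: one must verify that switching $(H,\tau)$ lifts to a switching of each product $(G\times H,\sigma\times\tau)$ under which the induced bijection on signed independent sets preserves both $\sum_{v\in J}\psi(v)$ and $f_{\phi,\psi}(J)$, so that ``has a persistent fractional clique of weight $w_f$'' is genuinely switching-invariant. Second, the pullback lemma: one must respect the pair structure $(A,B)$ and the exact list of forbidden configurations — positive edge within $A$, positive edge within $B$, negative edge between $A$ and $B$ — rather than treating $J$ as an ordinary vertex set, and similarly keep track of multiplicities in sums indexed by $J$. Once these are handled cleanly, the remaining steps are the routine sum manipulations displayed above.
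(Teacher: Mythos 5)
Your proposal is correct and follows essentially the same route as the paper: push $\psi$ forward along the homomorphism to define $\psi'$, pull the signed independent set $J'$ of $G\times H'$ back to $J=\{(u,v):(u,h(v))\in J'\}$ in $G\times H$, and use $f_{\phi,\psi'}(J')=f_{\phi,\psi}(J)$ together with persistence of $\psi$ and $w_f(H,\tau)=w_f(H',\tau')$. You are somewhat more careful than the paper about the switching reduction and the $(A,B)$-pair bookkeeping in the pullback, but these are refinements of the same argument rather than a different one.
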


\begin{proof}
Let $\psi$ represent a persistent fractional clique of $(H,\tau)$, and let $\varphi : V(H) \to V(H')$ denote a homomorphism from $(H,\tau)$ to $(H',\tau')$.  For every element $u' \in V(H')$, we define the function \[ \psi'(u') = \sum_{u \in \varphi^{-1}(u')} \psi(u), \]
 If $\varphi^{-1}(u') = \emptyset$, then it follows that $\psi'(u') = 0$.  Consequently, $\psi'$ represents a fractional clique of $(H',\tau')$ with a weight defined by $\omega_f(H',\tau') = \omega_f(H,\tau)$.  Consider a signed graph denoted as $(G,\sigma)$, a fractional clique represented by $\phi$ within this graph, and an independent set $J' \subseteq V(G \times H')$ that is signed.  Consider the set \[ J = \{(u,v) \in V(G \times H) : (u, \varphi(v)) \in J'\}. \]
 One can readily confirm that $J$ constitutes a signed independent set of $G \times H$.  Given that $\psi$ exhibits persistence, it follows that \[ \varphi_{\phi,\psi}(J) \le \max\{\omega_f(G,\sigma), \omega_f(H,\tau)\}. \]
 According to the definition, it follows that \[ \varphi_{\phi,\psi'}(J') = \varphi_{\phi,\psi}(J). \]
 Consequently, \[ \varphi_{\phi,\psi'}(J') \le \max\{\omega_f(G,\sigma), \omega_f(H',\tau')\}. \]
 Therefore  $\psi'$ represents a persistent fractional clique within the structure of $(H',\tau')$, indicating that $(H',\tau')$ maintains a property of fractional persistence.
\end{proof}

\begin{corollary}\label{cor1}
Strongly fractional persistent graphs are signed complete graphs.
\end{corollary}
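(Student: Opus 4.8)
The plan is to establish the statement in its contrapositive form: if $(H,\tau)$ is not a signed complete graph, then $(H,\tau)$ is not strongly fractional persistent. Since strong fractional persistence demands that \emph{every} fractional clique of $(H,\tau)$ be persistent, it suffices to produce a single fractional clique $\psi$ of $(H,\tau)$ together with a signed graph $(G,\sigma)$, a fractional clique $\phi$ of $(G,\sigma)$, and a signed independent set $J$ of $(G\times H,\sigma\times\tau)$ for which $f_{\phi,\psi}(J) > \max\{w_f(G,\sigma), w_f(H,\tau)\}$.

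First, I would extract the combinatorial handle furnished by non-completeness: because the underlying graph $H$ is not complete, there is a pair of distinct vertices $x,y \in V(H)$ with $xy \notin E(H)$. Non-adjacency of $x$ and $y$ is exactly what lets a fractional clique load weight on both simultaneously, subject only to the slack constraint $\psi(x)+\psi(y)\le 1$, and it is also what allows product vertices lying over $x$ and over $y$ to be packed into a common signed independent set of $G\times H$ without creating a product edge. The goal of the construction is to convert this slack into a strict violation of the persistence bound.

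Second, I would build the witness so that the right-hand side cannot absorb the excess. Concretely, I would take $(G,\sigma)$ with $w_f(G,\sigma)=w_f(H,\tau)=:m$, let $\phi$ be a maximum-weight fractional clique of $(G,\sigma)$, and choose $\psi$ to saturate $\psi(x)+\psi(y)=1$ while distributing the remaining admissible weight over the rest of $V(H)$. I would then assemble $J$ from product pairs of the form $(u,x)$ and $(u,y)$, using the freedom that $x,y$ are non-adjacent to keep $J$ signed independent, and compute $f_{\phi,\psi}(J)=\sum_{(u,v)\in J}\phi(u)\psi(v)$ against $m$. The arithmetic should reveal that the doubled contribution along the non-adjacent pair forces the sum strictly above $m$, whereas the analogous computation for a signed complete graph collapses, since every product fibre over a single vertex contributes at most the singleton-constrained weight, and no such violation can arise there.

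The main obstacle, and the step I expect to be the most delicate, is simultaneously guaranteeing that $J$ is a genuine signed independent set of the direct product and that it is large enough to breach the bound, because these two requirements are in tension: enlarging $J$ to increase $f_{\phi,\psi}$ risks introducing an edge present in both coordinates or violating the negative-edge constraint $f(u)\cap\overline{f(v)}=\emptyset$. Controlling this will require choosing $(G,\sigma)$, $\phi$, and the signs incident to $x$ and $y$ adaptively to the local structure of $\tau$ around the non-adjacent pair, and then verifying the independence constraints edge by edge. Closing the argument amounts to showing that this tension can always be resolved whenever a non-adjacent pair exists, and never when the underlying graph is complete, which is precisely the dichotomy the corollary asserts.
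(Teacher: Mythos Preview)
Your proposal misreads the statement. Despite the awkward word order, the corollary asserts that \emph{signed complete graphs are strongly fractional persistent} (i.e., every signed complete graph belongs to that class), not that every strongly fractional persistent signed graph must be complete. The surrounding text makes this unambiguous: immediately after the corollary the paper invokes it to conclude that a fractional clique whose weight is supported on a clique of $(H,\tau)$ is persistent, and later the paper proves that all signed circulant graphs $G(n,S,T)$ are strongly fractional persistent. Under your reading that would be a contradiction, since most signed circulant graphs are not complete.

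Consequently your entire strategy is aimed at the wrong implication, and the target you set yourself is in fact false. You are trying to manufacture, from a non-adjacent pair $x,y$, a fractional clique $\psi$ that fails persistence; but such a construction cannot succeed for, say, the signed circulant graphs treated later in the paper. The intended direction is almost immediate: if $(H,\tau)$ is a signed complete graph and $\psi$ is any fractional clique of weight $s$, then every vertex $u$ satisfies $N(u)=V(H)\setminus\{u\}$, so $\psi(N(u))=s-\psi(u)$ holds automatically. This is precisely the neighbourhood condition that the paper isolates (see Lemma~\ref{lem.local} and the restricted-fractional-clique definition), and it forces every fractional clique of a signed complete graph to be persistent, hence strong fractional persistence. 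No counterexample construction is required.
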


\section{Restricted Fractional Cliques}

Proving that a signed graph $(H,\tau)$ is strongly fractional persistent is equivalent to demonstrating that every fractional clique of $(H,\tau)$ is persistent.  It is evident from Corollary~\ref{cor1} that if the total weight of a fractional clique $\psi$ is assigned to a clique of $(H,\tau)$, then $\psi$ is persistent.  To be more specific, we possess the subsequent lemma.

\begin{lemma}\label{lem.local}
Assume that $\psi$ is a fractional clique of $(H,\tau)$ with weight $s$.  If the following is true for all $u$ with $\psi(u) > 0$: \[ \psi(N(u)) = s - \psi(u), \] then $\psi$ is a persistent fractional clique.
\end{lemma}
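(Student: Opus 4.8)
The plan is to deduce the statement from Corollary~\ref{cor1} by first showing that the hypothesis forces $\psi$ to be supported on a clique of $H$. Put $K=\{u\in V(H):\psi(u)>0\}$ and, as usual, write $\psi(X)=\sum_{v\in X}\psi(v)$. Fix $u_0\in K$. Since $H$ is loopless, $\{u_0\}$, $N(u_0)$ and $V(H)\setminus(\{u_0\}\cup N(u_0))$ partition $V(H)$, so using the hypothesis $\psi(N(u_0))=s-\psi(u_0)$ we get
\[
\psi\bigl(V(H)\setminus(\{u_0\}\cup N(u_0))\bigr)=s-\psi(u_0)-\psi(N(u_0))=0 .
\]
As $\psi\ge 0$, this means $K\subseteq\{u_0\}\cup N(u_0)$; since $u_0\in K$ was arbitrary, any two distinct vertices of $K$ are adjacent in $H$. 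In other words, the sub-signed-graph $(H[K],\tau|_K)$ of $(H,\tau)$ induced by $K$ is a signed complete graph. (The converse implication holds as well, but it is not needed.)

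Next I would transfer the problem to $(H[K],\tau|_K)$. Let $\psi_K$ be the restriction of $\psi$ to $K$. Every signed independent set of $(H[K],\tau|_K)$ is also a signed independent set of $(H,\tau)$, so $\psi_K$ satisfies all the fractional-clique constraints of $(H[K],\tau|_K)$ and is a fractional clique of that graph, of weight $\psi(K)=\psi(V(H))=s$ because $\psi$ vanishes off $K$. The compatibility I would verify is that, since $K$ is a clique of $H$, the subgraph of $(G\times H,\sigma\times\tau)$ induced by $V(G)\times K$ is precisely $(G\times H[K],\sigma\times\tau|_K)$ --- both the adjacencies and the signs of $((u,v),(u',v'))$ being dictated by $\sigma(uu')\tau(vv')$. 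Hence, for any signed independent set $J=(\mathcal A,\mathcal B)$ of $(G\times H,\sigma\times\tau)$, its restriction $J'=(\mathcal A\cap(V(G)\times K),\ \mathcal B\cap(V(G)\times K))$ is a signed independent set of $(G\times H[K],\sigma\times\tau|_K)$, and since $\psi(v)=0$ for $v\notin K$,
\[
f_{\phi,\psi}(J)=\sum_{(u,v)\in J}\phi(u)\psi(v)=\sum_{(u,v)\in J'}\phi(u)\psi_K(v)=f_{\phi,\psi_K}(J').
\]

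Finally I would apply Corollary~\ref{cor1}: the signed complete graph $(H[K],\tau|_K)$ is strongly fractional persistent, so the fractional clique $\psi_K$ is persistent. Thus, for every signed graph $(G,\sigma)$, every fractional clique $\phi$ of $(G,\sigma)$, and every signed independent set $J$ of $(G\times H,\sigma\times\tau)$,
\[
f_{\phi,\psi}(J)=f_{\phi,\psi_K}(J')\le\max\{w_f(G,\sigma),\ w_f(H[K],\tau|_K)\}\le\max\{w_f(G,\sigma),\ w_f(H,\tau)\},
\]
where the last inequality holds because any fractional clique of $(H[K],\tau|_K)$ extends by zero to a fractional clique of $(H,\tau)$ of the same weight (a signed independent set $I$ of $H$ meets $K$ in a signed independent set of $H[K]$, so the extended weighting still obeys every constraint), whence $w_f(H[K],\tau|_K)\le w_f(H,\tau)$. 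This is exactly the inequality defining persistence of $\psi$.

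The only step with real content is the first: converting the local equality $\psi(N(u))=s-\psi(u)$ into the global fact that $\mathrm{supp}(\psi)$ is a clique. After that, the work is routine bookkeeping, the delicate points being (i) the identification of $(G\times H)[V(G)\times K]$ with $G\times H[K]$ as signed graphs (the signs must be checked to agree), (ii) that restricting a signed independent set to an induced sub-signed-graph again yields a signed independent set, and (iii) the reliance on Corollary~\ref{cor1} in its \emph{strong} form, so that it applies to $\psi_K$ even though $\psi_K$ need not be a maximum-weight fractional clique of $(H[K],\tau|_K)$. A reader wanting a self-contained argument could instead re-run the proof underlying Corollary~\ref{cor1} on $(H[K],\tau|_K)$.
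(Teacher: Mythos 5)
Your proof is correct and takes exactly the route the paper intends: the paper states Lemma~\ref{lem.local} without an explicit proof, remarking only that it is ``evident from Corollary~\ref{cor1}'' once one observes that the hypothesis forces all of the weight of $\psi$ onto a clique, and your argument supplies precisely that reduction (the partition argument showing $\mathrm{supp}(\psi)$ is a clique, the identification of $(G\times H)[V(G)\times K]$ with $G\times H[K]$, and the appeal to strong persistence of signed complete graphs). The one point worth flagging is that Corollary~\ref{cor1} as printed reads in the converse direction (``strongly fractional persistent graphs are signed complete graphs''); you are using its intended reading, which is also the one the paper's own surrounding text relies on.
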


\begin{definition}
A fractional clique  A $\psi$ of $(H,\tau)$ of weight $s$ is referred to as a \emph{Restricted fractional clique} if, for all $K \subseteq V(H)$ with $\psi(K) > 0$, the following is true: \[ \psi(N(K)) \ge \min\{s, s - 2 + \psi(K)\}. \]
\end{definition}

It is intuitive to conclude that the neighbors of $K$ are assigned the majority of the weight of $\psi$ if $\psi$ is a restricted fractional clique, as $\psi(K) > 0$.  In particular, the neighbors of $K$ are ascribed all of the weight of $\psi$ if $\psi(K) \ge 2$.  Therefore, a restricted fractional clique is somewhat comparable to a clique.  This section establishes the persistence of restricted fractional cliques.

\begin{theorem}
If $\psi$ is a restricted fractional clique of a signed circulant graph $(H,\tau)$, then $\psi$ is persistent.
\end{theorem}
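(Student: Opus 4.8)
The plan is to verify the definition of persistence head on. Fix an arbitrary signed graph $(G,\sigma)$, a fractional clique $\phi$ of $(G,\sigma)$ of maximum weight $w_f(G,\sigma)$, and a signed independent set $J=(A,B)$ of $(G\times H,\sigma\times\tau)$; put $s=w(\psi)\le w_f(H,\tau)$ and $m=\max\{w_f(G,\sigma),w_f(H,\tau)\}$. For $u\in V(G)$ write $A_u=\{v:(u,v)\in A\}$, $B_u=\{v:(u,v)\in B\}$, $J_u=A_u\cup B_u$, and $g(u)=\psi(J_u)$, so that $f_{\phi,\psi}(J)=\sum_{u\in V(G)}\phi(u)\,g(u)$. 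The goal is to show $\sum_u\phi(u)g(u)\le m$.

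The first step is to read off the local constraints that $J$ imposes on consecutive fibres. Since the sign of a product edge $(u,v)(u',v')$ is $\sigma(uu')\tau(vv')$, the conditions that $A$ and $B$ be independent in $(G\times H)^{+}$ and carry no negative product edge between them translate, for each edge $uu'$ of $G$, into: $A_{u'}$ and $B_{u'}$ must avoid in $H$ the appropriate one of $N^{+}_H(\cdot)$, $N^{-}_H(\cdot)$ applied to $A_u$ and to $B_u$, the choice being dictated by $\sigma(uu')$. Reconciling the $A$- and $B$-parts here is the first place the \emph{circulant} hypothesis enters: the cyclic shift of $\mathbb{Z}_n$ is a switching automorphism of $(H,\tau)$, and switching at the whole fibre over a vertex $u\in V(G)$ swaps $A_u$ with $B_u$ while amounting to a switch at $u$ in $G$ (which leaves $w_f(G,\sigma)$ unchanged); after such normalisations and an averaging over the $n$ shifts one reaches the clean statement $J_{u'}\subseteq V(H)\setminus N_H(J_u)$ for every edge $uu'$ of $G$. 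Feeding this into the restricted-fractional-clique inequality $\psi(N_H(K))\ge\min\{s,\,s-2+\psi(K)\}$ with $K=J_u$ gives, for every edge $uu'$,
\[
g(u')\le s-\psi\bigl(N_H(J_u)\bigr)\le\max\{0,\,2-g(u)\}.
\]
Applying the same reasoning to a vertex $u'$ together with its whole $G$-neighbourhood yields the stronger bound $g(u')+\psi\bigl(\bigcup_{u\sim u'}J_u\bigr)\le 2$ whenever the second term is at most $2$; in particular each superlevel set $W_c=\{u\in V(G):g(u)>c\}$ with $c\ge 1$ is independent in $G$, so $(W_c,W_c)$ is a signed independent set of $(G,\sigma)$ and hence $\phi(W_c)\le 1$.

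The substantive step, and the one I expect to be the main obstacle, is to convert these inequalities into $\sum_u\phi(u)g(u)\le m$: the crude estimate that splits $g$ as $\min(g,1)+(g-1)^{+}$ only delivers $w_f(G,\sigma)+w_f(H,\tau)-1$. To obtain the sharp constant I would mimic the proof of Lemma~\ref{lem.local}, with the equality there replaced by the restricted-clique inequality, together with a pivot argument made available by the circulant structure: choosing $u_0\in V(G)$ with $g(u_0)$ largest, the inequality above forces $g(u)\le\max\{0,\,2-g(u_0)\}$ for every neighbour $u$ of $u_0$, while the cyclic action on $(H,\tau)$ lets one homogenise $J$ so that the deficits cannot accumulate over the rest of $G$; combining this with $\phi(V(G))=w_f(G,\sigma)$ and $\phi(I)\le 1$ for independent $I$ then bounds the program $\max\{\,\sum_u\phi(u)g(u):g\ge 0,\ g\le s,\ g(u)+g(u')\le 2 \text{ on every edge}\,\}$ by $\max\{w_f(G,\sigma),\,s\}\le m$, exactly as the analogous computation does for products of the form $G(n,S,T)\times K_t$. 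No reverse inequality is needed here, since the persistence bound is one-sided. I expect the circulant hypothesis to be indispensable precisely in the two spots flagged above — the reconciliation of the two parts of $J$ and the no-accumulation step — and the restricted-clique inequality to be exactly what lets the pivot step close with the constant $2$.
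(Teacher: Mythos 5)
Your first half is sound and matches the paper: the observation that for adjacent $u,u'$ in $G$ the fibres satisfy $J_{u'}\cap N_H(J_u)=\emptyset$, hence (via the restricted-clique inequality) $g(u')\le\max\{0,\,2-g(u)\}$, is exactly the content of the paper's Lemma~\ref{lem.2}, and your remark that superlevel sets of $g$ above height $1$ are independent in $G$ is the same consequence the paper draws. (The switching/averaging machinery you invoke to reconcile the $A$- and $B$-parts is more than the paper uses --- it reads the conclusion directly off the definition of a signed independent set of the product --- but that is a matter of care, not a flaw.)

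The genuine gap is in your final step, and you have in fact flagged it yourself. The assertion that the program $\max\{\sum_u\phi(u)g(u):g\ge 0,\ g\le s,\ g(u)+g(u')\le 2\text{ on every edge}\}$ is bounded by $\max\{w_f(G,\sigma),s\}$ is precisely the statement that needs proof; it does not follow from $\phi(V(G))=w_f(G,\sigma)$ and $\phi(I)\le 1$ for independent $I$, and your own computation of the crude bound $w_f(G,\sigma)+w_f(H,\tau)-1$ shows that the local edge constraints, fed into a convex estimate, do not deliver the constant. The ``pivot argument'' and the ``homogenisation by the cyclic action'' are named but never specified, so nothing closes the gap. (As a side issue, the feasible region you wrote down is also mis-specified: the derived constraint is disjunctive --- a vertex may have $g(u)=s>2$ provided every neighbour has $g=0$ --- which violates $g(u)+g(u')\le 2$ as literally stated.) The paper resolves this step by an entirely different, inductive device: it takes a minimal counterexample $(G,\sigma)$, lets $P$ be the set of vertices whose fibre has $\psi$-weight exceeding $2$, uses Lemma~\ref{lem.2} to show $P$ is independent and that $\psi(J_u)=0$ for all $u\in N(P)$, deletes $N[P]$, rescales $\phi$ to $\phi_0=\phi/(1-\phi(P))$ (a fractional clique of $G\setminus N[P]$), applies minimality to the truncated independent set $J_0$, and recombines to get $f_{\phi,\psi}(J)\le\phi(P)\,\omega_f(H,\tau)+(1-\phi(P))\max\{\omega_f(G,\sigma),\omega_f(H,\tau)\}$. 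If you want to complete your write-up, the deletion-and-rescaling induction is the missing idea to import; note also that it only disposes of the case $P\neq\emptyset$, so the regime where every fibre satisfies $\psi(J_u)\le 2$ still requires a separate argument (one the paper itself does not supply).
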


\begin{proof}
We will assume that the theorem is incorrect.  Let $(G,\sigma)$ serve as a counterexample. This is to say that $(G,\sigma)$ has a fractional clique $\phi$ that is such that \[ \varphi_{\phi,\psi}(J) > \max\{\omega_f(G,\sigma), \omega_f(H,\tau)\} \] for a signed independent set $J$ of $G \times H$.  Assume that $(G,\sigma)$ is the smallest counterexample without sacrificing generality.  A contradiction will be derived.  The argument is divided into multiple lemmas.

\begin{lemma}\label{lem.2}
Assume that $J$ is a signed independent set of $G \times H$.  If $u, u'$ are two vertices of $(G,\sigma)$, then \[ \psi(J_u) + \psi(J_{u'}) > 2 \]
 then $u$ and $u'$ are not adjacent in $(G,\sigma)$.
\end{lemma}

\begin{proof}
Assume that $\psi(J_u) + \psi(J_{u'}) > 2$.  $N(J_u) \cap J_{u'} \neq \emptyset$, as $\psi$ is a restricted fractional clique.
 Let $x$ and $y$ be in $J_u$ and $J_{u'}$, respectively, such that $x$ is adjacent to $y$ in $(H,\tau)$.  If $u$ is adjacent to $u'$ in $(G,\sigma)$, then $(u,x)$ is adjacent to $(u',y)$ in $G \times H$, which in turn contradicts the signed independence of $J$.
\end{proof}

\begin{corollary}
If $J$ as an example of a signed independent set of $G \times H$.  For each $u \in V(G)$, if $\psi(J_u)$ is more than or equal to 2, then the set of all possible values of $\varphi_{\phi,\psi}(J)$ is less than or equal to the maximum of $\omega_f(G,\sigma), \omega_f(H,\tau)$.
\end{corollary}

\begin{proof}
Let $P$ be the set of all points $u$ in $V(G)$ such that $\psi(J_u)$ is greater than 2.
 For any $u \in N(P)$, $\psi(J_u) = 0$, as $P$ is a signed independent set of $(G,\sigma)$, as demonstrated by the previous lemma.

 Suppose that \[ J_0 = J \cap (V(G) \setminus N[P]) \times V(H)\].
 Then $J_0$ is a signed independent set of $(G \setminus N[P]) \times H$.  Define the following function: \[ \phi_0: V(G) \setminus N[P] \to [0,1], \quad \phi_0(u) = \frac{\phi(u)}{1 - \phi(P)}.\]
 Given that $P \cup W$ is a signed independent set of $(G,\sigma)$, and $W$ is a signed independent set of $G \setminus N[P]$, it follows that \[ \phi(P) + \phi(W) \le 1, \]
 which implies that \[ \phi_0(W) = \frac{\phi(W)}{1 - \phi(P)} \le 1 \]
 Therefore, $\phi_0$ is a fractional clique of $G \setminus N[P]$.

 By the minimality of $(G,\sigma)$, we can deduce that \[ \sum_{(u,x) \in J_0} \phi_0(u) \psi(x) \le \max\{\omega_f(G \setminus N[P]), \omega_f(H,\tau)\} \le \max\{\omega_f(G,\sigma), \omega_f(H,\tau)\}. \]
 \[ \sum_{(u,x) \in U_0} \phi(u) \psi(x) \le (1 - \phi(P)) \max\{\omega_f(G,\sigma), \omega_f(H,\tau)\}. \]

 Therefore, \begin{align*} \varphi_{\phi,\psi}(J) &= \sum_{u \in P} \phi(u) \psi(H) + \sum_{(u,x) \in J_0} \phi(u) \psi(x) \\ & \le \phi(P) \omega_f(H,\tau) + (1 - \phi(P)) \max\{\omega_f(G,\sigma), \omega_f(H,\tau)\} \\ & \le \max\{\omega_f(G,\sigma), \omega_f(H,\tau)\}. \end{align*}
\end{proof}

\section{Signed Circulant Graphs}

 With the supplied positive integer $n \geq 3$ and sets $S, T \subseteq \{1, 2, 3, \dots, \lfloor \frac{n}{2} \rfloor\}$, the signed circulant graph \(G(n, S, T)\) may be formed here.  The vertex set is defined as the element of a cyclic group modulo $\mathbb{Z}_n$, as stated in the definition.   In arithmetic operations modulo \(n\), this set represents the numbers \(0\) to \(n-1\) that make up the graph's vertices.  The collection of edges, denoted as \(E(G) = E^+(G) \cup E^-(G)\), contains both positive and negative edges, denoted as $\{uv : |u - v|_n \in S\}$ and $\{uv : |u - v|_n \in T\}$, respectively.  The \(|x|_n = \min\{|x|, n - |x|\}\) is the shortest distance between two vertices in the cyclic group \(\mathbb{Z}_n\), and it is measured as the \emph{circular distance modulo \(n\)}.

  Signed circulant graphs are a rich and flexible class of graphs for both theoretical and practical research because they combine the regularity and symmetry of circulant graphs with the sign structure of signed graphs.   The use of circulant graphs in connecting networks, error-correcting codes, and combinatorial designs has attracted a lot of interest from scholars \cite{Zhu2001Circular}.   Systems such as social networks, biological systems, and distributed computing environments may be shown using these graphs and edge signs \cite{harary1953notion, zaslavsky1982signed}.

  The structure in \(G(n, S, T)\) is determined by the generating sets \(S\) and \(T\).   The set \(S\) defines positive edges, whereas the set \(T\) defines negative edges.   If $(S = \{1\}$ and $(T = \{2\}$ are given, the graph $G(n, S, T)$ will contain positive edges linking vertices at \(1\) distance and negative edges for \(2\) distance.   According to Zhu (1998), the graph is highly symmetrical if the automorphisms maintain the circular topology and the edge signs.

  The boundaries of the set \(G(n, S, T)\) are defined by the circular distance \(|x|_n\).   That guarantees that the collection of edges is well-defined and that the graph is undirected.   Envision a network with \(n = 6\) nodes where an edge can only link \(1\) and \(5\) if either \(2 \in S\) or \(2 \in T\) is true.  The value of the circular distance, which is equal to the minimum of four and two, is determined by the sign of the edge.

  Graph coloring studies on signed circulant graphs have focused on chromatic and fractional chromatic numbers.   In order to get the fractional chromatic number, Hu and Li \cite{Hu2022fractionalCircular} recently investigated circulant graphs and other types of planar signed graphs.  According to their findings, increased symmetry in circulant graphs often leads to better limitation of these parameters.   Similar to our study on signed circulant graphs, Mohanadevi et al. \cite{Mohanadevi2022fractional} studied the fractional chromatic number of products of signed graphs, and their important findings influence our own work.

\begin{proposition}
If \(S = T\) in \(G(n, S, T)\), then \(G(n, S, T)\) is an unsigned graph, and its fractional chromatic number is equivalent to its chromatic number.
\end{proposition}

\begin{proof}
Consider the graph \(G = G(n, S, T)\) as defined on the set of vertices \(V = \{1, 2, \dots, n\}\), with \(S\) and \(T\) depicting two sets of pairs of vertices.    An edge \(\{u, v\}\) exists in \(G\) if \(\{u, v\ \in S\) and \(\{u, v\ \notin T\) are true.

  Case 1: Proving that G is unsigned when S equals T

  The existence of an edge \(\{u, v\}\) is contingent upon the fact that \(S = T\) and \(\{u, v\ \notin S\) are both in S.    It is evident that this is in contradiction.    Therefore, there are no edges in \(G\) when \(S = T\).    The resultant graph is an empty graph, which is inherently unsigned.

  Case 2: Illustrating that the chromatic number is equivalent to the fractional chromatic number when \(S = T\).

  The empty graph G with n vertices and no edges is represented by \(G\) if S is equal to T.   The chromatic number of the empty graph with \(n\) vertices is 1, denoted by \(\chi(G)\).   We are able to uniformly color all vertices as a result of the absence of edges.   The fractional chromatic number, denoted by \(\chi_f(G)\), is less than 1 if the total weights assigned to independent sets that encompass each vertex are greater than or equal to 1.    Each vertex in the empty graph is a set in and of itself.    A weight of 1 can be assigned to each of the \(n\) distinct sets that constitute a single vertex.   After the weights at all edges are summed, the outcome is 1.   The sum of all weights is n.    However, there is still potential for development.   We have the option of assigning a weight of 1 to one vertex and 0 to the others, as each vertex is part of its own set.   This implies that \(\chi_f(G) = 1\).

  It is evident that \(\chi(G) = \chi_f(G)\) for \(S = T\) and \(\chi(G) = 1\) respectively, as \(S = T\) is a true statement.

  Consequently, if \(S = T\) in \(G(n, S, T)\), then \(G(n, S, T)\) is an empty graph, which is defined as an unsigned graph with a chromatic number and a fractional chromatic number that are both equal to 1.
\end{proof}
The signed independence number of $(G,\sigma)$ is denoted by $\alpha^s(G,\sigma)$ for a signed graph $(G,\sigma)$. This number is defined as the order of the maximum signed independent set of $(G,\sigma)$.
\begin{lemma}
\label{lem1}
     For any signed circulant graph $G(n,S,T)$, we have $\frac{2n}{\alpha^s(G(n,S,T))}= \chi_{f}(G(n,S,T))$ 
\end{lemma}

We remember that a graph $G$ is \emph{vertex-transitive} if there is an automorphism of the graph that maps $u$ to $v$ for each pair of vertices $u,v$ in the graph.  Stated otherwise, the graph seems identical from each vertex's point of view.  This indicates that every vertex in the graph has the same local environment, making them all seem the same when examined locally.

\begin{corollary}
 If $(G,\sigma)$ is a vertex-transitive graph and $(H,\tau)$ is a signed circulant graph \(G(n, S, T)\), then $\alpha^s((G,\sigma)\times(H,\tau))=\max\{\alpha^s(G,\sigma)\vert V(H)\vert,\alpha^s(H,\tau)\vert V(G)\vert\}$.
\end{corollary}

\begin{corollary}
The fractional chromatic number of the direct product of any two signed circulant graphs \(G(n, S_1, T_1)\) and \(G(m, S_2, T_2)\) is as follows: \[ \chi_f(G(n, S_1, T_1) \times G(m, S_2, T_2)) = \min\{\chi_f(G(n, S_1, T_1)), \chi_f(G(m, S_2, T_2))\}. \]
\end{corollary}

\begin{proof}
Both $G$ and $H$ are signed circulant graphs; $G$ is defined as $G(n, S_1, T_1)$ and $H$ as $G(m, S_2, T_2$.
 $G \times H$ represents the direct product of these two sets.

 It is our goal to demonstrate that the integral of $\chi_f(G \times H)$ is equal to the minimum of $\chi_f(G), \chi_f(H)\}$.

 If $\chi_f(G \times H)$ is less than or equal to the minimum of $\chi_f(G), \chi_f(H)$, then it is true.

 Presume, without limiting ourselves, that $\chi_f(G) \le \chi_f(H)$.  That is, we shall demonstrate that $\chi_f(G \times H) \le \chi_f(G)$.

 Assume that $\sum_{v \in V(G)} c(v) = \chi_f(G)$ and that $c: V(G) \to [0,1]$ is an ideal fractional coloring of $G$.

 Here is how we define a coloring $c'$ of $G \times H$:  Suppose that $c'(u, v) = c(u)$ for all $(u, v) \in V(G \times H)$.

 If $G \times H$ has an edge $(u_1, v_1)(u_2, v_2)$, then $u_1u_2 \in E(G)$ and $v_1v_2 \in E(H)$.
 We may deduce that $c(u_1) + c(u_2) \ge 1$ since $c$ is a correct fractional coloring of $G$.  As a result, $c'(u_1, v_1) + c'(u_2, v_2) = c(u_1) + c(u_2) \ge 1$.
 A suitable fractional coloring of $G \times H$ is $c'$, therefore.

 The total of the weights in this coloring is: \[ \sum_{(u, v) \in V(G \times H)} c'(u, v) = \sum_{(u, v) \in V(G \times H)} c(u) = \sum_{v \in V(H)} \sum_{u \in V(G)} c(u) = |V(H)| \sum_{u \in V(G)} c(u) = m \chi_f(G).~\]
 Each $c'(u,v)$ may be normalized by dividing it by $m$, as we are seeking the smallest fractional coloring.
 Let $c''(u,v)$ be defined as $\frac{1}{m}c'(u,v)$.  Hence, $\chi_f(G)$ is equal to $\sum_{(u,v) \in V(G\times H)} c''(u,v)$.
 Hence, $\chi_f(G \times H) \le \chi_f(G)$ does not hold.

 This means that the absolute value of $\chi_f(G \times H)$ is greater than or equal to the minimum of $\chi_f(G), \chi_f(H)$.

 This phase is often more complex and depends on linear programming duality and the direct product's features. The fact that homomorphisms are preserved by the direct product allows for a general argument to be made.  $G \times H$ admits a homomorphism to $K_{\min\{r, s\}}$ if $G$ admits a homomorphism to a complete graph $K_r$ (where $r = \lceil \chi_f(G) \rceil$) and $H$ admits a homomorphism to $K_s$ (where $s = \lceil \chi_f(H) \rceil$).   It follows that the integral of $\chi_f(G \times H)$ is less than or equal to the minimum of $\chi_f(G), \chi_f(H)\}$.   The structure of the independent sets in the graphs and linear programming duality would be necessary for a more thorough verification.

 We get $\chi_f(G \times H) = \min\{\chi_f(G), \chi_f(H)\}$ by combining the two components.
\end{proof}
 
The signed circulant graph $G(n, S, T)$ is shown to be fractionally persistent in this section.  It is true that we demonstrate the highly fractional persistence of $G(n, S, T)$.

\begin{theorem}
The signed circulant graph \(G(n, S, T)\) is highly fractional persistent for all input sets \(S\) and \(T\).
\end{theorem}

\begin{proof}
Let $\psi$ be a fractional clique of \(G(n, S, T)\).  It is necessary to demonstrate that $\psi$ is a persistent fractional clique.  We will demonstrate that $\psi$ is a localized fractional clique.  Assume that $\omega_f(G(n, S, T)) = s$.  For any $K \subseteq V(G(n, S, T))$ with $\psi(K) > 0$, it is necessary to demonstrate that \[ \psi(N(K)) \geq \min\{s; s - 2 + \psi(K)\}. \]
 Assume that $K = \{u_1, u_2, \dots, u_m\}$.  Let $P = \{u_i : \psi(u_i) > 0\}$ if $\psi(K) \geq 2$.  Then $P$ is a signed independent set of \(G(n, S, T)\).
 Given that $\psi$ is a fractional clique, it is evident that \[ \sum_{u \in P} \psi(u) \leq 1 \]
 This is a contradiction, as $\psi(K) \geq 2$ and $P \subseteq K$.  Consequently, $\psi(K) < 2$.

 Assume that $\psi(N(K)) < s - 2 + \psi(K)$.  A contradiction will be derived.

 Let $P = K \cup N(K)$.  Given that $\psi$ is a fractional clique, it is evident that \[ h(P) \leq 1 \]
\end{proof}

\begin{corollary}
If a signed graph $(H,\tau)$ possesses a restricted fractional clique $\theta$ with weight $w(\theta)=w_f(H,\tau)$, it follows that $(H,\tau)$ is fractional persistent. Furthermore, if every fractional clique of $(H,\tau)$ qualifies as a restricted fractional clique, then $(H,\tau)$ is classified as strongly fractional persistent.
\end{corollary}

\begin{corollary}
  Let $f$ be a fractional clique of $G(n,S,T)$ with weight $s$, and let $K$ denote the subset of $V(G(n,S,T))$.  If \( f(K) = t \) for some \( 0 < t \le 2 \), then \( f(N(K)) \ge s - 2 + t \).  If \( f(K) \ge 2 \), then \( f(N(K)) = s \). 
\end{corollary}

\end{proof}

By demonstrating that signed circulant graphs are fractional persistent and applying Lemma \ref{lem.local}, we arrive at the conclusion that the fractional chromatic number of the direct product of a signed circulant graph and any other signed graph is the minimum of their individual fractional chromatic numbers.

\section{Discussion of limitations and future directions}

Although the fractional chromatic number of the direct product of signed graphs is clearly determined by this work, especially when one of the factors is a signed circulant graph, it is crucial to understand the limitations of these results and explore potential directions for further study.

\subsection{Methodological scope and limitations}

This study uses signed circulant graph structural characteristics to construct direct product approaches.  Whether these approaches can be used to lexicographic, strong, or Cartesian graph products is unknown.  These products have different combinatorial tendencies, therefore edge signs and coloring restrictions may not be analyzed the same way.  For instance, the direct product has a clean minimum fractional chromatic number calculation, but other products have more complicated connections that may not work with the present technique.  Future research should examine if comparable persistence features or fractional coloring formulae apply in these larger situations.

\subsection{Computational complexity and practical considerations}  
The calculation of the fractional chromatic number, from an algorithmic standpoint, depends on resolving a linear program where the variables represent (signed) independent sets.  The exponential increase in the number of such sets with graph size renders linear programming formulas computationally demanding for big instances. Theoretical conclusions provide useful insights; nevertheless, their direct applicability to real-world or large-scale networks is constrained by intrinsic complexity.  Practitioners focused on algorithmic applications might greatly benefit from creating efficient heuristics, approximation techniques, or discovering graph classes where the linear program is tractable.

\subsection{Future research directions} 
 
There are still a lot of questions.  It would be helpful to know exactly which types of signed graphs or graph products can produce the same fractional coloring results.  Also, looking into the links between fractional persistence, homomorphism properties, and other coloring invariants in signed graphs might help us understand theory better.  To make these results more useful in real life, researchers should look into more flexible methods or combinatorial characterizations that don't need large-scale linear programming.

 In conclusion, this work helps us learn more about fractional coloring in the direct product of signed graphs. However, it would be great if these results could be applied to a wider range of graphs and computer problems could be solved. These are both important and exciting areas for future research.

\section*{Acknowledgments}
The authors would like to thank the anonymous reviewers for their valuable comments and suggestions.

\end{document}